\title{An Asymptotic Version of the Multigraph 1-Factorization Conjecture}
\author{E.~R.~Vaughan \\
School of Mathematical Sciences \\
Queen Mary, University of London \\
e-mail: \texttt{e.vaughan@qmul.ac.uk}}
\date{\today}
\newtheorem{theorem}{Theorem}
\newtheorem{lemma}[theorem]{Lemma}
\newtheorem{corollary}[theorem]{Corollary}
\newtheorem{conjecture}[theorem]{Conjecture}
\DeclareMathOperator{\pr}{Pr}
\DeclareMathOperator{\ex}{E}
\providecommand{\size}[1]{\left|#1\right|}
\providecommand{\floor}[1]{\lfloor#1\rfloor}
\providecommand{\ceiling}[1]{\lceil#1\rceil}
\begin{document}

\maketitle

\begin{abstract} We give a self-contained proof that for all positive integers $r$ and all $\epsilon > 0$, there is an integer $N = N(r, \epsilon)$ such that for all $n \ge N$ any regular multigraph of order $2n$ with multiplicity at most $r$ and degree at least $(1+\epsilon)rn$ is 1-factorizable. This generalizes results of Perkovi{\'c} and Reed, and Plantholt and Tipnis. \end{abstract}

\section{Introduction}

In 1985 Chetwynd and Hilton \cite{ch85} made the following conjecture, which is often called the ``1-Factorization Conjecture'':

\begin{conjecture} Any regular simple graph of order $2n$ and degree at least $n$ is 1-factorizable. \label{1fact} \end{conjecture}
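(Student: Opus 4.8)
The plan is to recast the conclusion in the language of edge-colouring. Since $G$ is $d$-regular of even order $2n$, it is 1-factorizable if and only if its edges can be properly coloured with exactly $d$ colours, i.e. $G$ is \emph{Class 1} with $\chi'(G) = d$; each colour class is then a perfect matching, and the $d$ classes form the 1-factorization. By Vizing's theorem $\chi'(G) \in \{d, d+1\}$, so the entire content of the conjecture is to exclude the Class-2 case under the hypothesis $d \ge n$. I would split into two regimes according to how $d$ compares with $2n$: the \emph{dense} regime, where the complement $\overline{G}$ has small degree $2n-1-d$ and $G$ is a mild perturbation of the 1-factorizable complete graph $K_{2n}$; and the \emph{threshold} regime $d \approx n$, which is the genuinely hard case and on which the argument must ultimately stand or fall.

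First I would produce an \emph{approximate} 1-factorization covering all but a sparse remainder. Because $d \ge n$, the graph $G$ is a robust expander: any two vertices have at least $2d - 2n + 2 \ge 2$ common neighbours, and every large vertex subset sends out many edges, so $G$ and its large subgraphs carry perfect matchings in abundance. Feeding this robustness into a near-optimal edge-colouring result of Pippenger--Spencer type should yield a family of $(1-o(1))d$ pairwise edge-disjoint 1-factors whose removal leaves a graph $H$ of degree $o(d)$. The first real design decision is to carry this out while keeping $H$ highly structured --- regular, or regular up to a tightly controlled deficiency --- so that the leftover is a candidate for \emph{exact} completion rather than an arbitrary sparse mess.

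The crux is completing the partial factorization exactly. Following the absorbing philosophy, I would reserve at the very outset a small, flexible \emph{absorbing} collection of 1-factors, engineered so that for \emph{every} admissible sparse leftover $H$ the union of $H$ with the reserved edges can be redistributed into precisely the remaining number of 1-factors. The degree hypothesis is exactly what should power such an absorber: the guaranteed common neighbours let one reroute edges and swap along short augmenting structures, and the expansion lets one find the matchings that the absorber must splice together. Ruling out the Class-2 alternative is, concretely, ruling out an overfull subgraph of maximum degree $d$; this links the approach to the Overfull Conjecture of Chetwynd and Hilton, and one checks that $G$ itself is not overfull since $\size{E(G)} = dn = d\floor{2n/2}$.

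The hard part will be the complete absence of slack at the threshold $d = n$. That the identity $\size{E(G)} = d\floor{2n/2}$ holds with \emph{equality} means $G$ sits precisely on the boundary of being overfull, so a single badly placed leftover edge can obstruct the final colour class. In an asymptotic, $(1+\epsilon)$-type statement one may spend an $\epsilon$-fraction of the degree on building a lavish absorber and on discarding awkward configurations; here there is no $\epsilon$ to spend, the absorber must be extracted from a critically dense graph, and all $d$ colour classes must be perfect matchings with nothing left over. I expect this exact-threshold completion --- and in particular the parity control it demands --- to be the decisive difficulty, and the step at which the hypothesis $d \ge n$ must be used in full force rather than with room to spare.
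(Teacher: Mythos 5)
You have attempted to prove a statement that the paper itself does not prove: Conjecture~\ref{1fact} is presented as an open conjecture, and the entire point of the paper is that the author can only establish the asymptotic relaxation, Theorem~\ref{ma1fact}, in which the degree bound $n$ is weakened to $(1+\epsilon)n$ (indeed to $(1+\epsilon)rn$ for multigraphs). So there is no proof in the paper to compare yours against, and your text, read honestly, is not a proof either: it is a research programme whose decisive step you explicitly leave open. The gap is exactly where you say it is, but naming a gap does not fill it. Your ``absorbing collection of 1-factors, engineered so that for \emph{every} admissible sparse leftover $H$ the union can be redistributed'' is asserted, never constructed, and at the threshold $d=n$ there is no spare degree from which to build it --- this is precisely why the paper's machinery (random splitting via Lemma~\ref{splits}, Vizing, K\"onig, equalized colourings, and the alternating-path exchanges of Lemma~\ref{keylemma}) stops at $(1+\epsilon)$: the proof of Theorem~\ref{ma1fact} spends the slack explicitly, requiring $d(G) > rn + 29rn^{5/6}$, and every step (the $14rn^{5/6}$ degree surplus, the Hall-type Lemma~\ref{hallcm}, the final K\"onig step needing $d(R) > rn/2$) consumes part of it.

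There are also concrete errors in the parts you do argue. Your expansion claim that any two vertices have at least $2d - 2n + 2 \ge 2$ common neighbours is wrong at the threshold: for adjacent $u,v$ inclusion--exclusion gives only $\size{N(u) \cap N(v)} \ge 2d - 2n$, which is $0$ when $d = n$, so the ``robust expander'' premise of your first stage is not available with room to spare. Further, Pippenger--Spencer-type results produce near-optimal proper colourings whose classes are \emph{near}-perfect matchings; upgrading $(1-o(1))d$ colour classes to genuinely perfect matchings while keeping the $o(d)$-degree remainder regular (your stated design requirement) is itself a substantial task that your sketch does not address --- note that the paper needs the full apparatus of Steps~1--4 of Section~\ref{keysection} just to convert approximate colour classes into 1-factors \emph{with} $\epsilon$-slack. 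Finally, your overfullness check is vacuous: a regular graph of even order trivially satisfies $\size{E(G)} = d \cdot \floor{2n/2}$, so it tells you nothing about excluding overfull subgraphs, which is where the Class-2 obstruction would actually live. In short: plausible modern strategy (and indeed close in spirit to the later resolution of the conjecture for large $n$ by Csaba, K\"uhn, Lo, Osthus and Treglown, which required far heavier tools), but as submitted it proves nothing, and it is not comparable to the paper, which deliberately claims only the $\epsilon$-version.
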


Should this conjecture be true, a pleasant consequence is that for any regular graph $G$ of even order, at least one of $G$ and its complement is 1-factorizable. A natural generalization to multigraphs of bounded multiplicity was made subsequently by Plantholt and Tipnis \cite{plantholttipnis2} (see also \cite{plantholttipnis1}):

\begin{conjecture} Let $G$ be a regular multigraph of order $2n$ with multiplicity at most $r$. If the degree of $G$ is at least $rn$ then $G$ is 1-factorizable. \label{m1fact} \end{conjecture}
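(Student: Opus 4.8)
The plan is to recast the statement as a chromatic-index problem and then to confront head-on the single additive gap that separates the exact threshold from what current counting tools deliver. Since $G$ is $d$-regular with $d = rn$, a proper edge-colouring using exactly $d$ colours is the same thing as a partition of $E(G)$ into $d$ perfect matchings, so it suffices to prove $\chi'(G) = d$, i.e.\ that $G$ is Class~1.

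First I would record the one structural fact that makes the threshold $rn$ natural. For every vertex set $U$ one has $e(U,\overline U) = d\size{U} - 2e(U)$, and since the multiplicity bound gives $e(U) \le r\binom{\size{U}}{2}$, a short computation shows $e(U,\overline U) \ge d$ whenever $\size{U} \le d/r = n$. Because $\size{U} + \size{\overline U} = 2n$, at least one side has size at most $n$, so \emph{every} odd set $U$ satisfies $e(U,\overline U) \ge d$, equivalently $e(U) \le d\floor{\size{U}/2}$. Thus $G$ contains no overfull subgraph and its density parameter is at most $d$; by the Goldberg--Seymour theorem this already yields $\chi'(G) \le d+1$. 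The entire content of the conjecture is therefore the removal of this last colour.

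The hard part --- and the reason the statement is a conjecture rather than a theorem --- is precisely closing the gap from $d+1$ down to $d$ with no slack whatsoever. I would attempt this by an absorption strategy rather than a direct Vizing-fan recolouring, since fan and Kempe-chain arguments in multigraphs tend to leak a bounded number of uncoloured edges that one cannot afford at the exact threshold. Concretely: before extracting any matchings, reserve a small ``absorbing'' subgraph $A$ whose pseudorandomness guarantees that for every sparse remainder $R$ of the correct degree parity the union $A \cup R$ decomposes perfectly into matchings; then peel perfect matchings off $G - A$ down to such a sparse remainder, and finish with the absorber. The degree condition $d \ge rn$ must be spent exactly to make both the peeling phase and the absorber self-correcting, with the bounded multiplicity $r$ controlling the local fluctuations that govern whether each stage stays feasible.

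The main obstacle, to be explicit, is that every robust tool available --- Goldberg--Seymour, the R\"odl nibble, semi-random matching extraction --- carries either an additive $+1$ in the number of colours or a multiplicative $(1+\epsilon)$ in the degree, and the conjecture asserts that neither loss is real. Eliminating the additive slack requires an absorber that is simultaneously flexible enough to swallow an arbitrary parity-correct remainder and cheap enough not to disturb the exact degree bookkeeping; building such an absorber for $d$-regular multigraphs at degree exactly $rn$ is, in my view, the crux, and the step most likely to resist any argument short of substantial new ideas beyond the asymptotic method.
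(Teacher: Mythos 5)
You have not produced a proof, and to be fair, neither does the paper: the statement you were given is Conjecture~\ref{m1fact}, which the paper explicitly leaves open, proving only the asymptotic Theorem~\ref{ma1fact} in which the degree bound $rn$ is relaxed to $(1+\epsilon)rn$. Your opening reduction is correct and worth keeping: for $d$-regular $G$ of order $2n$ with $d \ge rn$, the multiplicity bound $e(U) \le r\binom{\size{U}}{2}$ does give $e(U,\overline{U}) \ge d$ for every odd set $U$ (the expression $\size{U}(d - r(\size{U}-1))$ is concave in $\size{U}$ and is at least $d$ at both endpoints $\size{U}=1$ and $\size{U}=n$), so $G$ has no overfull subgraph and the Goldberg--Seymour theorem yields $\chi'(G) \le d+1$. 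But note this already invokes machinery vastly heavier than anything in the paper, and it still lands one colour short of what is needed.

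The genuine gap is the entire second half of your proposal. The absorption strategy is stated purely as an intention: you never define the absorbing subgraph $A$, never formulate the pseudorandomness property it must satisfy, never prove that such an $A$ exists inside an arbitrary $d$-regular multigraph at the exact threshold, and never show that the peeling phase can be driven down to a ``parity-correct sparse remainder'' without getting stuck --- indeed you concede in your final sentence that this step would likely resist your argument. A plan that names its own missing lemma is not a proof, and here the missing lemma is precisely the open content of the conjecture. For contrast, the paper's actual machinery (the Chernoff-based balanced split of Lemma~\ref{splits}, equalized colourings via Theorem~\ref{mcdiarmid}, the five-edge alternating paths, and the Hall/K\"onig finishing steps of Lemma~\ref{keylemma}) consumes the slack in the degree hypothesis at every turn --- e.g.\ it needs $d_A(v) > rn/2 + 14rn^{5/6}$ to guarantee the paths and matchings exist --- which is exactly why it proves the $(1+\epsilon)rn$ version and not the exact one. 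If you want a provable statement from your approach, aim it at Theorem~\ref{ma1fact}, where the $\epsilon rn$ surplus can pay for the losses your nibble/absorption toolkit incurs; closing the additive gap at degree exactly $rn$ remains open, as the paper's construction following Conjecture~\ref{m1fact} (which shows the bound $rn$ would be tight for odd $n$) should warn you.
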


If true, Conjecture \ref{m1fact} is best possible for every $r \ge 1$, at least when $n$ is odd. This is demonstrated by the following construction. Suppose $r$ and $n$ are positive integers where $n$ is odd and $r > 1$. Consider the graph $H$ of order $2n$, formed from three graphs $A$, $B$ and $M$. $A$ and $B$ are complete graphs on $n$ vertices each, and $M$ is a matching of $n$ edges, in which each edge joins a vertex in $A$ with a vertex in $B$. (See Figure \ref{bestposs}.) Let $G$ be the multigraph obtained from $H$ by replacing each edge of $M$ by $r - 1$ parallel edges, and each other edge by $r$ parallel edges.

As $A$ and $B$ each have an odd number of vertices, any 1-factor of $G$ must contain an edge that joins a vertex in $A$ with a vertex in $B$. There are only $n(r - 1)$ such edges, so there can be at most $n(r - 1)$ disjoint 1-factors. As $G$ has degree $rn - 1$, it is not 1-factorizable.

In the case where $r = 1$, and $n$ is odd, we can take $G$ to be the disjoint union of two complete graphs on $n$ vertices. $G$ is regular of degree $n - 1$, and is not 1-factorizable, as it has no 1-factors at all.

The following approximate resolution of Conjecture \ref{1fact} was obtained by H{\"a}ggkvist (unpublished) and independently by Perkovi{\'c} and Reed \cite{perkovicreed}:

\begin{theorem} For any $\epsilon > 0$ there is an integer $N = N(\epsilon)$ such that for all $n \ge N$ any regular simple graph of order $2n$ with degree at least $(1+\epsilon)n$ is 1-factorizable. \label{a1fact} \end{theorem}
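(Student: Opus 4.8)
The plan is to establish the equivalent statement that a $d$-regular simple graph $G$ of order $2n$ with $d \ge (1+\epsilon)n$ has chromatic index exactly $d$: since $G$ is $d$-regular, in any proper $d$-edge-colouring every vertex meets all $d$ colours exactly once, so each colour class is a perfect matching and the colouring is a $1$-factorization. Vizing's theorem already gives $\chi'(G) \le d+1$, so the whole difficulty is to rule out the single surplus colour. The structural fact I would lean on throughout is that $d > \tfrac12\,\size{V(G)}$; consequently any two vertices have at least $2d - 2n \ge 2\epsilon n$ common neighbours, so $G$ is a robust expander and a near-perfect matching can be repaired into a perfect one by rerouting along a short augmenting alternating path.

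I would organise the construction in two phases, both carried out randomly. \textbf{Phase 1 (approximate factorization).} First set aside a sparse random spanning subgraph $R \subseteq G$ in which every vertex is incident with $\Theta(\epsilon^2 n)$ edges, to be used for repairs later; the remainder $G' = G - R$ is still $(1 - o(1))d$-regular and still satisfies the expansion bound. Then approximately factorize $G'$ by the semi-random (nibble) method: repeatedly choose a random near-perfect matching, delete it, and iterate, using Chernoff and bounded-difference concentration to show that with high probability the residual degrees stay tightly concentrated from one round to the next. After $(1 - o(1))d$ rounds this yields that many pairwise-disjoint matchings, each leaving only $o(n)$ vertices exposed, together with an uncoloured remainder $W$ of maximum degree $o(n)$.

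The crux --- and the step I expect to be the genuine obstacle --- is to upgrade this near-factorization into an exact one with precisely $d$ classes, \emph{each a full perfect matching}. Note this cannot be done by naive greedy peeling: once the residual degree drops below $n$ the Dirac hypothesis is lost, and a sparse regular leftover such as $W$ need not be $1$-factorizable at all (witness the Petersen graph). Two deficiencies must therefore be corrected simultaneously: the $o(n)$ vertices left exposed in each near-perfect matching, and every edge still lying in $W \cup R$. Here I would combine the reservoir $R$ with the expansion of $G$: because each pair of vertices has $\ge 2\epsilon n$ common neighbours, a class missing two vertices can be made to cover them by a bounded-length alternating reroute, the displaced edges being charged to $R$. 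The delicate point is to make all $d$ of these corrections globally consistent, so that every edge of $G$ is used exactly once and every class spans; I would encode this as a defect/feasibility condition --- a Hall- or flow-type inequality on the bipartite incidence between exposed vertices and available reservoir edges --- and verify it with high probability, the $\epsilon$-slack supplying the surplus that keeps the condition satisfiable.

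Finally the global count closes the argument automatically: $G$ has $dn$ edges and $d$ disjoint perfect matchings account for exactly $dn$ of them, so disjointness plus perfectness already forces a partition of $E(G)$. For a \emph{fixed} graph $G$ meeting the hypotheses, the two-phase construction succeeds with positive probability once $N(\epsilon)$ is chosen large enough for all the concentration estimates to survive a union bound over the $O(n)$ nibble rounds and $O(n)$ repair steps; a successful run exhibits a $1$-factorization, so every such $G$ is $1$-factorizable. (As an aside, since the order $2n$ is even one could instead aim for a decomposition into Hamilton cycles and at most one perfect matching, turning the same expansion into edge-disjoint Hamilton cycles; but packing perfect matchings directly keeps the clean-up self-contained.)
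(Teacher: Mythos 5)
Your Phase 2 is not a proof step but a placeholder for the entire difficulty, and you say so yourself (``the step I expect to be the genuine obstacle''). Turning $(1-o(1))d$ near-perfect matchings plus a leftover $W \cup R$ into exactly $d$ perfect matchings partitioning $E(G)$ requires, for each of the roughly $n$ classes, repairing $o(n)$ exposed vertices, and simultaneously absorbing every edge of $W$ and every unused reservoir edge into some class --- all without ever using an edge twice. You never state the ``Hall- or flow-type inequality'' that is supposed to govern this, never verify it, and never explain how the edges displaced by one class's reroute get assigned to another class. The closing counting remark does not help: it only says that \emph{if} you already have $d$ edge-disjoint perfect matchings then they exhaust $E(G)$; it provides no mechanism for producing them. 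Indeed the Petersen-graph caveat you raise cuts against your own plan: the residue left by the nibble is exactly the kind of sparse graph for which no general decomposition tool exists, and the expansion of $G$ does not by itself convert into a completion argument without the bookkeeping you have omitted.

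Compare this with how the paper escapes the same trap: structurally, not probabilistically. The vertex set is split (Lemma \ref{splits}, a one-page Chernoff argument) into two equal halves $A$ and $B$ so that every vertex has roughly half its degree in each side. The two induced halves are edge-coloured with about $\Delta + r$ colours (Vizing's theorem plus McDiarmid's equalization), each deficient colour class is completed into a 1-factor by an explicit five-edge alternating path across the bipartite middle $C$ --- whose existence is a deterministic minimum-degree counting argument, not a feasibility condition verified with high probability --- and the small uncoloured residue inside the halves is finished off using Hall's theorem. The decisive point is that after these steps every remaining uncoloured edge lies in $C$, and since all completed colour classes are 1-factors, this remainder is a \emph{regular bipartite} graph; K\"onig's theorem then decomposes it into 1-factors for free. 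That bipartite exit strategy is precisely what your plan lacks: you have no structure guaranteeing that the leftover graph is always decomposable, which is why your completion step has to carry all the weight and remains unproven.
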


In this note, we shall prove the following generalization of Theorem \ref{a1fact}, which is an approximate version of Conjecture \ref{m1fact}:

\begin{theorem} For all positive integers $r$ and all $\epsilon > 0$, there is an integer $N = N(r, \epsilon)$ such that for all $n \ge N$ any regular multigraph of order $2n$ with multiplicity at most $r$ and degree at least $(1+\epsilon)rn$ is 1-factorizable. \label{ma1fact} \end{theorem}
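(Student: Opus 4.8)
The plan is to reduce the multigraph problem to the simple-graph case (Theorem~\ref{a1fact}) by an edge-colouring / decomposition argument, so that the multiplicity-$r$ structure is handled by splitting $G$ into simpler pieces whose 1-factorizations can be combined. Let $G$ be a regular multigraph of order $2n$, multiplicity at most $r$, and degree $d \ge (1+\epsilon)rn$. First I would try to decompose the edge set of $G$ into $r$ spanning subgraphs $G_1, \dots, G_r$, each of which is \emph{simple} (or nearly so) and roughly regular, so that I can apply the known simple-graph result to each piece. The natural way to obtain a simple subgraph is to take, for each pair of vertices, one of the (at most $r$) parallel edges between them; doing this consistently across all pairs splits the multiset of edges into $r$ simple graphs. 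The difficulty is that these $r$ simple graphs need not be regular, and a priori a vertex of $G$ could have its $d$ incident edges distributed very unevenly among the $r$ parts.

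The key step is therefore to produce a \emph{balanced} decomposition: I want to choose the splitting so that each $G_i$ is close to regular, with every degree at least $(1+\epsilon')n$ for some $\epsilon' > 0$ depending on $\epsilon$. Since $G$ has degree $(1+\epsilon)rn$ and we are splitting into $r$ parts, the average degree in each part is $(1+\epsilon)n$, leaving an $\epsilon n$ cushion to absorb any imbalance. The cleanest route is a probabilistic one: assign each parallel class of edges (or each individual edge) to one of the $r$ parts independently at random, and use a concentration inequality (Chernoff/Azuma, which is why \pr\ and \ex\ are declared) to show that with positive probability every vertex has degree within $O(\sqrt{n \log n})$ of its expectation $(1+\epsilon)n$ in every part simultaneously. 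For $n$ large this deviation is dominated by the $\epsilon n$ cushion, so each part has minimum degree comfortably above $n$. A complication is that the parts must be simple graphs, so I cannot assign edges fully independently when a pair has up to $r$ parallel edges; instead I would distribute the parallel edges of each pair into distinct parts (e.g.\ by a uniformly random injection from the parallel class into $\{1, \dots, r\}$), which keeps each $G_i$ simple while making the per-vertex contributions to each part into sums of bounded independent (across pairs) increments, still amenable to concentration.

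Once a balanced simple decomposition $G = G_1 \cup \dots \cup G_r$ is in hand with each $G_i$ of minimum degree at least $(1+\epsilon')n$, the parts are not exactly regular, so Theorem~\ref{a1fact} does not apply verbatim. To bridge this gap I would first make each $G_i$ regular by adding a small regularizing graph: embed $G_i$ into a regular simple graph $G_i'$ of the same order and degree $\Delta_i = \max$ degree, by adding edges only between vertices of deficient degree while keeping multiplicity one and degree still at least $(1+\epsilon'/2)n$ after rounding. One must ensure the total added degree is even at each vertex and that the regularization does not create repeated edges with $G_i$; a standard deficiency-version of a regular-completion lemma, or an application of a regular-factor theorem to the bipartite ``deficiency'' structure, accomplishes this for $n$ large. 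Then each regularized $G_i'$ satisfies the hypotheses of Theorem~\ref{a1fact} and is 1-factorizable.

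The final assembly is the delicate bookkeeping step. Taking the 1-factorizations of all the $G_i'$ and deleting the added regularizing edges yields a partition of $E(G)$ into matchings, but these are not 1-factors of $G$ once regularizing edges are removed---they become near-perfect matchings with a bounded number of uncovered vertices per colour. The main obstacle I anticipate is converting this near-1-factorization of $G$ into a genuine 1-factorization: I must pair up the defect matchings and patch them together, using the $\epsilon rn$ slack in the degree to find augmenting structures (short alternating paths) that merge two near-perfect matchings into two perfect matchings, or to redistribute a bounded number of leftover edges. This patching is exactly where the extra $\epsilon$ in the degree hypothesis is consumed, and controlling it---ensuring the leftover edges at each stage form something factorizable and that the total number of colours comes out to exactly $d$---is the crux of the argument and the step most likely to require careful, nonroutine work.
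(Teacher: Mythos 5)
Your approach is the Plantholt--Tipnis route (decompose the multigraph into simple graphs and apply Theorem~\ref{a1fact} to the pieces), which is precisely the approach the paper deliberately abandons---the paper notes it is only known to work for even $r$---in favour of splitting the \emph{vertex} set into two halves $A$, $B$, colouring $G_A$ and $G_B$ via Vizing's and McDiarmid's theorems, and repairing the colouring with explicit alternating-path exchanges. The genuine gap in your proposal is the final assembly step, and it is not a bookkeeping matter: it is the whole theorem. After you regularize each $G_i$ to some degree $\Delta_i$ and 1-factorize, you have $\sum_i \Delta_i$ colour classes, and this sum exceeds $d = d(G)$ by roughly the sum of the degree spreads, i.e.\ by order $r\sqrt{n\log n}$ surplus classes. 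Deleting the fictitious regularizing edges leaves each class a matching of $G$ that misses some vertices. To finish you must simultaneously (a) dissolve the surplus classes entirely, redistributing their genuine edges into other classes, and (b) complete every one of the remaining $d$ classes to a perfect matching---using only edges of $G$, all of which are already committed to classes, so every repair is a global recolouring. Your proposed tool, merging two near-perfect matchings along short alternating paths, does not do this: the symmetric difference of two matchings missing vertex sets $S_1$ and $S_2$ need not contain paths pairing $S_1$ with $S_2$ (a vertex missed by both matchings is an isolated vertex of the symmetric difference, and path endpoints can pair up within the same matching), and even when pairwise merging succeeds it reduces deficiency without reducing the number of classes. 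The $\epsilon$-slack in $d(G)$ does not rescue this step by itself; one needs an argument of the same kind and difficulty as the paper's Lemma~\ref{keylemma} (or Perkovi\'c--Reed's recolouring argument), so your reduction defers the crux rather than resolving it.

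A secondary, more repairable gap is the regularization of the pieces. You cannot always make a simple graph $\Delta$-regular by adding edges on the same vertex set: if, say, a single vertex has deficiency $2$ and all others have deficiency $0$, no loopless completion exists, and deficient vertices that are already pairwise adjacent obstruct simplicity. This is fixable for large $n$ (target degree $\Delta + O(\sqrt{n\log n})$ so that every vertex is deficient, and realize the deficiency sequence inside the dense complement of $G_i$), but as written ``a standard deficiency-version of a regular-completion lemma'' is doing unexamined work. By contrast, the paper never needs regular simple pieces at all: it colours the (irregular) induced graphs $G_A$ and $G_B$ with $\max\{\Delta(G_A),\Delta(G_B)\}+r$ colours, accepts that colour classes miss a few vertices, and then uses the many uncoloured edges between $A$ and $B$---which are still free---as the resource for its alternating-path repairs. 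That is exactly the resource your decomposition destroys by committing every edge of $G$ to a colour class before the repair phase begins.
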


In previous work, Plantholt and Tipnis have obtained this result in the special case where $r$ is even \cite{plantholttipnis2}. They employed a method of factorizing a multigraph into simple graphs, to which they applied Theorem \ref{a1fact}. Our approach is different, and does not need to distinguish between even and odd $r$. Our proof of Theorem \ref{ma1fact} is based on Perkovi{\'c} and Reed's proof of Theorem \ref{a1fact}, although we have simplified the argument in a number of respects, and so the proof presented here is shorter and simpler.

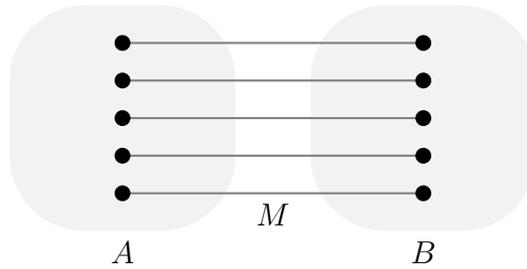
\begin{figure} \begin{center} \begin{tikzpicture}
\fill[gray!10,rounded corners=10mm] (0,0) rectangle +(3,3) (4,0) rectangle +(3,3);
\draw[gray, thick] (1.5,2.5) -- (5.5,2.5);
\draw[gray, thick] (1.5,2.0) -- (5.5,2.0);
\draw[gray, thick] (1.5,1.5) -- (5.5,1.5);
\draw[gray, thick] (1.5,1.0) -- (5.5,1.0);
\draw[gray, thick] (1.5,0.5) -- (5.5,0.5);
\fill[black] (1.5,2.5) circle (3pt) (5.5,2.5) circle (3pt);
\fill[black] (1.5,2.0) circle (3pt) (5.5,2.0) circle (3pt);
\fill[black] (1.5,1.5) circle (3pt) (5.5,1.5) circle (3pt);
\fill[black] (1.5,1.0) circle (3pt) (5.5,1.0) circle (3pt);
\fill[black] (1.5,0.5) circle (3pt) (5.5,0.5) circle (3pt);
\draw[black] (1.5,0) node[below] {$A$} (5.5,0) node[below] {$B$} (3.5,0.5) node[below] {$M$};
%
%
\end{tikzpicture} \end{center}
\caption{The graph $H$. $A$ and $B$ are complete graphs on $n$ vertices, and $M$ is a matching of $n$ edges.}
\label{bestposs} \end{figure}

\section{Preliminaries}

We shall begin by giving some definitions. We omit definitions of some of the most basic concepts in graph theory, which can be found, for example, in \cite{bondymurty}. Unless stated otherwise, all graphs will be multigraphs. By this we mean that they may contain multiple edges, but do not contain any loops. The vertex set and edge set of a graph $G$ are denoted $V(G)$ and $E(G)$ respectively. A set of edges is said to be \emph{parallel} if each edge joins the same pair of vertices. The \emph{multiplicity} of a graph $G$ is the maximum size of a set of parallel edges. A graph of multiplicity 1 is said to be \emph{simple}. The degree of a vertex $v \in V(G)$ is denoted $d(v)$. In the case that $G$ is regular, $d(G)$ denotes the degree of every vertex. The maximum and minimum degrees of $G$ are denoted $\Delta(G)$ and $\delta(G)$ respectively. Given a set of vertices $S$ and a vertex $v \in V(G)$, $d_S(v)$ is the number of edges of the form $vs$ where $s \in S$. The set of vertices that are adjacent to at least one vertex in $S$ is called the \emph{neighbour set} of $S$, denoted $N(S)$. The subgraph of $G$ induced by $S$ is denoted $G_S$.

A \emph{matching} in $G$ is a set of edges, no two of which are adjacent. Given a matching $M$, if a vertex $v \in V(G)$ is incident with an edge of $M$ then $v$ is said to be \emph{covered} by $M$, otherwise $v$ is \emph{missed} by $M$. A matching that covers every vertex is called a \emph{1-factor}. A \emph{1-factorization} of $G$ is a partition of $E(G)$ into disjoint 1-factors. A graph with a 1-factorization is said to be \emph{1-factorizable}. An \emph{edge-colouring} of $G$ is an assignment of colours to the edges of $G$ in which no two adjacent edges are given the same colour. The set of edges that are given a particular colour is called a \emph{colour class}. Since adjacent edges receive different colours, each colour class is a matching. The \emph{chromatic index} of $G$, denoted $\chi'(G)$, is the least number of colours needed for an edge-colouring. For a regular graph $G$, an edge-colouring with $d(G)$ colours is the same thing as a 1-factorization, as both are partitions of $E(G)$ into $d(G)$ disjoint matchings. In fact, in Section~\ref{keysection}, we shall show that a graph $G$ is 1-factorizable by giving a procedure for finding an edge-colouring of $G$ with $d(G)$ colours.

We shall need the following two classical theorems, both in their multigraph versions. (See e.g. \cite{bondymurty}.)

\begin{theorem} \emph{(Vizing's Theorem)} Let $G$ be a graph with multiplicity at most $r$. Then the chromatic index $\chi'(G)$ is at least $\Delta(G)$ and at most $\Delta(G)+r$. \label{vizing} \end{theorem}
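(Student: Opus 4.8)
The lower bound $\chi'(G) \ge \Delta(G)$ is immediate: the $\Delta(G)$ edges incident with a vertex of maximum degree are pairwise adjacent, so in any proper edge-colouring they must all receive distinct colours. The real content is the upper bound $\chi'(G) \le \Delta(G) + r$, and my plan is to establish it constructively, by exhibiting a proper edge-colouring with $k := \Delta(G) + r$ colours. I would argue by induction on $\size{E(G)}$, the empty graph being trivial.

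For the inductive step I would delete one edge $e_0 = vw_0$, apply the induction hypothesis to get a proper $k$-colouring of $G - e_0$, and then modify it so that $e_0$ can also be coloured. The basic bookkeeping device is, for each vertex $x$, the set of colours \emph{missing} at $x$, meaning those used on no edge incident with $x$. Since $d(x) \le \Delta(G)$, at least $k - \Delta(G) = r$ colours are missing at every vertex, and at least $r+1$ are missing at $v$ because $e_0$ is uncoloured. The central object is a \emph{Vizing fan} at $v$: a maximal sequence of distinct edges $e_0 = vw_0, e_1 = vw_1, \ldots, e_s = vw_s$ incident with $v$ in which, for each $i \ge 1$, the colour of $e_i$ is missing at $w_{i-1}$. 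In a multigraph the endpoints $w_i$ need not be distinct, which is exactly the complication that will force the extra colours.

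Two moves then complete the colouring. The first is \emph{fan rotation}: for a chosen index $j$, reassign to each edge $e_{i-1}$ (with $i \le j$) the colour currently on $e_i$, leaving $e_j$ uncoloured. This is again a proper partial colouring, precisely because $c(e_i)$ is missing at $w_{i-1}$, and it simply relocates the single uncoloured edge from $e_0$ to $e_j$. The second is a \emph{Kempe chain swap}. Fixing a colour $\beta$ missing at $v$ and a colour $\alpha$ missing at the fan tip $w_s$, I would either colour $e_s$ directly, after rotating the fan to $e_s$, in the easy case where $\alpha$ happens also to be missing at $v$; or else swap $\alpha$ and $\beta$ along the maximal $\alpha/\beta$-alternating path beginning at $v$. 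A case analysis according to where this path terminates — whether or not it reaches $w_s$ — shows that after the swap some fan edge can always be coloured, possibly after rotating the fan only partway.

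I expect the last case analysis to be the main obstacle, and it is exactly where the multiplicity $r$ is used. In a simple graph the fan vertices $w_0, \ldots, w_s$ are automatically distinct, which keeps the missing-colour sets and the alternating path under control and yields the classical bound $\Delta(G)+1$. When parallel edges are present several $w_i$ may coincide, so the naive argument can break down at a repeated endpoint; the slack provided by having $r$ guaranteed missing colours at each vertex, rather than just one, is what lets me always find an admissible colour with which to extend the fan or complete a rotation, at the cost of the weaker bound $\Delta(G)+r$. Verifying that $r$ missing colours always suffice, and checking that rotation and swap preserve a proper partial colouring when endpoints repeat, is the delicate bookkeeping at the heart of the proof.
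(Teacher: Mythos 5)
The paper does not prove this statement at all: Vizing's theorem in its multigraph form is quoted as a classical result with a pointer to Bondy and Murty, and is used purely as a black box (in Steps~1 and~3 of the proof of Lemma~\ref{keylemma}). So there is no in-paper argument to compare against; the honest comparison is with the standard textbook proof in the cited source, and your plan is essentially that proof: induction on $\size{E(G)}$, Vizing fans at $v$, fan rotation, and an $\alpha/\beta$ Kempe-chain swap with a case analysis on where the chain ends. Your easy steps are right: the lower bound is correct as stated, every vertex misses at least $k-\Delta(G)=r$ colours, and $v$ misses at least $r+1$ once $e_0$ is uncoloured.

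The one place your sketch stops short is, as you yourself flag, exactly the multigraph crux, and it deserves to be made precise because ``the slack of $r$ missing colours'' does not fix things by itself --- it fixes things via a specific count. In the hard case, maximality of the fan gives that every colour $\beta$ missing at the tip $w_s$ occurs on some fan edge $e_j$ and is therefore also missing at $w_{j-1}$; the Kempe-chain dichotomy (the maximal $\alpha/\beta$-path from $v$ can end at at most one of $w_{j-1}$ and $w_s$) needs $w_{j-1} \neq w_s$, and with parallel edges this can fail. The repair is a choice of $\beta$: if $w_{j-1}=w_s$ then $e_{j-1}$ is one of the at most $r$ fan edges parallel to $vw_s$, and $e_s$ itself has no fan successor, so at most $r-1$ of the colours missing at $w_s$ can be ``blocked'' in this way at every one of their fan occurrences; since at least $k-d(w_s)\ge r$ colours are missing at $w_s$, and in the hard case all of them appear on fan edges, some missing $\beta$ admits an occurrence $e_j$ with $w_{j-1}\neq w_s$, after which the simple-graph case analysis goes through. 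Supplying that count, together with the routine verification that rotation and swap preserve properness when endpoints repeat (properness only involves the colours present at $v$ and at each $w_{i-1}$, so repetition of endpoints causes no new conflicts), is what separates your outline from a complete proof; with it, your argument is correct and is the same argument the paper implicitly relies on by citation.
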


\begin{theorem} \emph{(K\"onig's Theorem)} Let $G$ be a bipartite graph of any multiplicity. Then $\chi'(G) = \Delta(G)$. \label{konig} \end{theorem}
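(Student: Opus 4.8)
The inequality $\chi'(G) \ge \Delta(G)$ is immediate, since the edges incident with a vertex of degree $\Delta(G)$ must all receive distinct colours, whatever the multiplicity. The substance of the theorem is therefore the reverse inequality $\chi'(G) \le \Delta(G)$, and the plan is to prove it by induction on the number of edges, writing $\Delta = \Delta(G)$ throughout. The base case of no edges is trivial, so suppose $G$ has at least one edge $uv$, where $u$ and $v$ lie in opposite parts $X$ and $Y$ of the bipartition. By the induction hypothesis $G - uv$ has an edge-colouring with $\Delta$ colours, and I aim to extend this colouring to $uv$, after recolouring if necessary, without introducing a $(\Delta+1)$st colour.

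Since $u$ has degree at most $\Delta$ in $G$, it meets at most $\Delta - 1$ edges in $G - uv$, so some colour $\alpha$ is \emph{missing} at $u$ (appears on no edge incident with $u$); likewise some colour $\beta$ is missing at $v$. If $\alpha = \beta$, I simply give $uv$ this common colour and finish. The interesting case is $\alpha \ne \beta$, which I handle with a Kempe-chain argument. Let $H$ be the subgraph of all edges coloured $\alpha$ or $\beta$. Each vertex meets at most one edge of each of these colours, so $H$ has maximum degree at most $2$, and every component of $H$ is a path or an even cycle. As $\alpha$ is missing at $u$, the vertex $u$ has degree at most $1$ in $H$, so the component $C$ containing $u$ is a path with $u$ as an endpoint, whose first edge (if any) is coloured $\beta$.

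The crux of the argument---and the only place where bipartiteness is used---is the claim that $v$ does not lie on $C$. Suppose it did. Since $\beta$ is missing at $v$, the vertex $v$ is also an endpoint of $C$, so $C$ is a $u$--$v$ path whose edges alternate $\beta, \alpha, \beta, \dots$, beginning with $\beta$ at $u$ and ending with $\alpha$ at $v$. A path whose first and last edges carry different colours of a two-colour alternation has an even number of edges, which would place $u$ and $v$ in the \emph{same} part of the bipartition; but $uv \in E(G)$ forces them into opposite parts, a contradiction. Hence $v \notin C$. I now interchange $\alpha$ and $\beta$ along the whole of $C$. Swapping two colours on a component of $H$ preserves properness (a vertex carrying both colours has both its $\alpha$- and $\beta$-edges inside the same component, so nothing collides), and it leaves $v$ untouched---so $\beta$ remains missing at $v$---while turning $u$'s lone $\beta$-edge into an $\alpha$-edge, so that $\beta$ becomes missing at $u$. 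Both endpoints now miss $\beta$, and colouring $uv$ with $\beta$ completes a proper $\Delta$-colouring of $G$, closing the induction.

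I expect the parity step to be the main obstacle, precisely because it is what distinguishes the bipartite case from the general one: for arbitrary graphs the statement is false (a triangle has $\Delta = 2$ but $\chi' = 3$), so any correct argument must use bipartiteness essentially, and here it enters exactly through the even length of the hypothetical alternating $u$--$v$ path. If the recolouring bookkeeping became awkward I would fall back on an embedding route: first equalise the two parts by adjoining isolated vertices, then make the graph $\Delta$-regular by repeatedly joining a degree-deficient vertex of $X$ to one of $Y$ (possible since the parts carry equal total degree). A $\Delta$-regular bipartite multigraph satisfies Hall's condition---for $S \subseteq X$ the $\Delta\,\size{S}$ edges leaving $S$ can be absorbed only by $N(S)$, forcing $\size{N(S)} \ge \size{S}$---hence has a perfect matching, and peeling off perfect matchings one at a time decomposes it into $\Delta$ matchings whose restrictions to $G$ furnish the required colouring.
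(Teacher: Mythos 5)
The paper does not prove this statement: K\"onig's theorem is quoted as a classical result with a pointer to \cite{bondymurty}, so there is no in-paper argument to compare against, and your proof must stand on its own --- which it does. Your main argument is the standard Kempe-chain induction for the bipartite edge-colouring theorem, and it is correct, including the two points that need care. First, the parity step: since $\alpha$ is missing at $u$ and $\beta$ at $v$, a hypothetical $u$--$v$ component of $H$ would be a path alternating $\beta,\alpha,\dots,\alpha$, hence of even length, placing $u$ and $v$ in the same part of the bipartition while the edge $uv$ places them in opposite parts --- this is exactly where bipartiteness enters, and your accounting of first/last edge colours is right. Second, the multigraph subtleties are handled implicitly but correctly: a colour missing at $u$ avoids any parallel copies of $uv$ surviving in $G - uv$; pairs of parallel edges coloured $\alpha$ and $\beta$ form $2$-cycles of $H$, which are even cycles and cannot be the component of $u$ (as $u$ has degree at most $1$ in $H$); and swapping colours on the path $C$ cannot disturb $v$ because $v \notin V(C)$. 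Your fallback sketch is the other classical route and is also sound, precisely because the multigraph setting lets you add parallel edges freely: equal part sizes and equal degree sums across the bipartition allow padding to a $\Delta$-regular bipartite multigraph, Hall's condition holds since the $\Delta|S|$ edges leaving $S$ must land in $N(S)$, and peeling off $\Delta$ perfect matchings restricts to the required colouring of $G$. Either route would serve the paper; the regularisation argument has the mild aesthetic advantage of reusing Hall's theorem (Theorem \ref{hall}), which the paper already states, while the Kempe-chain version is self-contained at the same level as the paper's other preliminaries.
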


An edge-colouring of a graph $G$ with $k$ colours is said to be \textit{equalized} if each colour class contains either $\floor{\size{E(G)}/k}$ or $\ceiling{\size{E(G)}/k}$ edges. The following was first observed by McDiarmid \cite{mcdiarmid}:

\begin{theorem} Let $G$ be a graph of any multiplicity with chromatic index $\chi'(G)$. Then for all $k \ge \chi'(G)$ there is an equalized edge-colouring of $G$ with $k$ colours. \label{mcdiarmid} \end{theorem}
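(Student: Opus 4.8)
The plan is to start from any proper edge-colouring of $G$ with $k$ colours and repeatedly rebalance it by recolouring alternating paths until it becomes equalized. Since $k \ge \chi'(G)$, a proper edge-colouring with $k$ colours certainly exists: take an optimal colouring and regard the remaining $k - \chi'(G)$ colours as empty classes. So I would fix such a colouring, with colour classes $C_1, \dots, C_k$ (some possibly empty), and write $m = \size{E(G)}$, so that $\sum_{c} \size{C_c} = m$. The goal is to show that whenever the colouring is not already equalized it can be modified, keeping it proper and keeping the same $k$ colours, so as to reduce the spread of the class sizes.

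Suppose the colouring is not equalized. Then by pigeonhole there are two colours $i$ and $j$ with $\size{C_i} \ge \size{C_j} + 2$. Consider the subgraph $H$ with edge set $C_i \cup C_j$. As the union of two matchings, $H$ has maximum degree at most $2$, so each of its connected components is either a path or a cycle whose edges alternate between colours $i$ and $j$. Every cycle has even length and hence equally many $i$-edges and $j$-edges, and the same is true of every even path; only the odd paths — those beginning and ending with the same colour — contribute an excess of one edge of that colour. Since the total excess of $i$-edges over $j$-edges is $\size{C_i} - \size{C_j} > 0$, at least one odd path $P$ must begin and end with an $i$-edge.

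The key step is to swap the two colours along $P$. Because $P$ is a whole connected component of $H$, interchanging colours $i$ and $j$ on $P$ leaves the colouring proper: at each internal vertex of $P$ the incident $i$-edge and $j$-edge simply exchange names, and at each endpoint $v$ the unique $H$-edge at $v$ is a terminal $i$-edge, while $v$ carries no $j$-edge (else $v$ would have degree $2$ in $H$ and not be an endpoint), so after the swap $v$ acquires a $j$-edge and loses its $i$-edge without creating a conflict. The swap converts $P$'s one-edge $i$-excess into a one-edge $j$-excess, thereby decreasing $\size{C_i}$ by $1$ and increasing $\size{C_j}$ by $1$. To see that iterating this terminates, I would track the potential $\Phi = \sum_{c=1}^{k} \size{C_c}^2$: moving one edge from a class of size $a$ to a class of size $b$ with $a \ge b+2$ changes $\Phi$ by $(a-1)^2 + (b+1)^2 - a^2 - b^2 = -2(a - b - 1) \le -2$, so $\Phi$ strictly decreases at each step. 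As $\Phi$ is a non-negative integer, after finitely many swaps no two classes can differ in size by more than $1$; and a colouring in which all class sizes lie within $1$ of each other and sum to $m$ must have every class of size $\floor{m/k}$ or $\ceiling{m/k}$, which is exactly the definition of equalized.

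The only points needing genuine care are the two just addressed — that recolouring a single path component preserves properness (a short check at the endpoints) and that the rebalancing halts (the monovariant $\Phi$) — and neither presents a real obstacle; the whole argument is elementary once $C_i \cup C_j$ is recognised as a disjoint union of alternating paths and cycles.
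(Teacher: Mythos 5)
Your proof is correct, but there is nothing in the paper to compare it against: the paper states this result as Theorem \ref{mcdiarmid} with only a citation to McDiarmid, giving no proof of its own. Your argument is the standard (and essentially McDiarmid's original) one: given two classes with $\size{C_i} \ge \size{C_j} + 2$, decompose $C_i \cup C_j$ into alternating paths and cycles, note that cycles and even paths are balanced so some component must be a path beginning and ending with $i$-edges, swap colours along it, and drive the potential $\Phi = \sum_c \size{C_c}^2$ down by at least $2$ per step. All the delicate points check out: your endpoint verification shows the swap preserves properness; the computation $(a-1)^2 + (b+1)^2 - a^2 - b^2 = -2(a-b-1) \le -2$ is right; and your closing observation that class sizes pairwise within $1$ and summing to $m$ must all lie in $\{\floor{m/k}, \ceiling{m/k}\}$ correctly converts termination into the equalized property (and, read contrapositively, it also justifies your earlier pigeonhole claim that a non-equalized colouring has two classes differing by at least $2$). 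One small point deserves explicit mention since the theorem is stated for graphs of arbitrary multiplicity: a pair of parallel edges, one in $C_i$ and one in $C_j$, forms a component of $C_i \cup C_j$ that is a cycle of length $2$. Your phrase ``every cycle has even length'' silently covers this, and such components are balanced and harmless, but in a multigraph setting it is worth a word, since readers accustomed to simple graphs may object that alternating cycles have length at least $4$. What your approach buys, relative to the paper, is self-containment: the paper outsources this lemma to the literature, whereas your Kempe-chain rebalancing gives a short, elementary, and algorithmic proof that fits the constructive spirit of the paper's Section \ref{keysection}.
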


We shall also need Hall's Theorem \cite{bondymurty}:

\begin{theorem} Let $G$ be a bipartite graph of any multiplicity, with bipartition $(X,Y)$. There is a matching covering every vertex of $X$ if and only if $\size{N(S)} \ge \size{S}$ for all $S \subseteq X$. \label{hall} \end{theorem}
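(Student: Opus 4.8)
The multiplicity of $G$ plays no role here: a matching uses at most one edge between any given pair of vertices, and the neighbour set $N(S)$ depends only on adjacency, so I would first discard parallel edges and prove the statement for the underlying simple bipartite graph. The forward implication is immediate — if $M$ is a matching covering $X$, then the $M$-partners of the vertices of any $S \subseteq X$ are $\size{S}$ distinct vertices lying in $N(S)$, whence $\size{N(S)} \ge \size{S}$ — so the whole content is the converse, and the plan is to prove it by induction on $m = \size{X}$, assuming Hall's condition $\size{N(S)} \ge \size{S}$ for every $S \subseteq X$.

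The base case $m = 1$ is clear, since the single vertex of $X$ has a neighbour as $\size{N(X)} \ge 1$. For the inductive step I would split into two cases according to whether Hall's condition holds with slack. \emph{Case 1: every nonempty $S \subsetneq X$ satisfies $\size{N(S)} \ge \size{S} + 1$.} Here I pick an arbitrary $x \in X$, match it to any neighbour $y$ (one exists since $\size{N(\{x\})} \ge 1$), and delete both $x$ and $y$. Deleting $y$ lowers $\size{N(S)}$ by at most one for each $S \subseteq X \setminus \{x\}$, so the strict surplus guarantees $\size{N(S) \setminus \{y\}} \ge \size{S}$; thus Hall's condition survives in the reduced graph and induction matches the rest. \emph{Case 2: some nonempty $S_0 \subsetneq X$ is tight, i.e.\ $\size{N(S_0)} = \size{S_0}$.} Then I would match $S_0$ into $N(S_0)$ by induction (Hall's condition for the pair $(S_0, N(S_0))$ is inherited, since $N(S) \subseteq N(S_0)$ for $S \subseteq S_0$), and separately match $X \setminus S_0$ into $Y \setminus N(S_0)$.

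The step I expect to be the main obstacle is verifying Hall's condition for this second subproblem, because its neighbour sets are taken in the \emph{reduced} graph on $(X \setminus S_0,\, Y \setminus N(S_0))$. For $T \subseteq X \setminus S_0$ the relevant neighbour set is $N(T) \setminus N(S_0)$, and using $N(S_0 \cup T) = N(S_0) \cup N(T)$ together with the tightness $\size{N(S_0)} = \size{S_0}$ and the hypothesis applied to $S_0 \cup T$ gives $\size{N(T) \setminus N(S_0)} = \size{N(S_0 \cup T)} - \size{N(S_0)} \ge \size{S_0 \cup T} - \size{S_0} = \size{T}$, where the last equality uses that $T$ and $S_0$ are disjoint. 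Hence Hall's condition holds for $(X \setminus S_0,\, Y \setminus N(S_0))$; since $S_0$ is nonempty, $\size{X \setminus S_0} < m$, so induction supplies a matching covering $X \setminus S_0$, and the union of the two matchings covers all of $X$.

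The only real care needed is the bookkeeping that cleanly separates the ``slack'' and ``tight'' alternatives (so that exactly one case applies and both reduce $m$), and the elementary set identity driving Case 2; everything else is routine. A natural alternative I would keep in reserve is an augmenting-path argument: start from any matching, and if some $x \in X$ is uncovered, build an alternating tree from $x$ and argue that its failure to yield an augmenting path exhibits a set $S$ with $\size{N(S)} < \size{S}$, contradicting the hypothesis. This avoids the case split but requires setting up the alternating-tree machinery, so I would present the induction as the primary route.
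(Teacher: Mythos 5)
Your proposal is correct, but there is nothing in the paper to compare it against: the paper does not prove this statement at all. Hall's Theorem is quoted as a classical result with a citation to Bondy and Murty, and it is then used as a black box (via Lemma \ref{hallc} and Lemma \ref{hallcm}) in Step~3 of the proof of Lemma \ref{keylemma}. What you have written is the standard Halmos--Vaughan induction on $|X|$ with the tight-set case split, and all the details check out: the reduction to the underlying simple graph is legitimate (a matching uses at most one edge per vertex pair, and $N(S)$ depends only on adjacency); Case~1 correctly uses the surplus to absorb the deletion of $y$; and the key inequality in Case~2, namely $|N(T)\setminus N(S_0)| = |N(S_0\cup T)| - |N(S_0)| \ge |S_0\cup T| - |S_0| = |T|$, is exactly the right identity, valid because $T$ and $S_0$ are disjoint and $S_0$ is tight. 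The two cases are exhaustive and each strictly reduces $|X|$, and the two partial matchings in Case~2 are vertex-disjoint since one lives in $(S_0, N(S_0))$ and the other in $(X\setminus S_0, Y\setminus N(S_0))$. So your write-up would serve as a self-contained proof of a result the paper simply imports from the literature; the only observation to add is that for the paper's purposes such a proof is unnecessary, which is presumably why the author cites rather than proves it.
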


A standard consequence of Theorem \ref{hall} is the following:

\begin{lemma} Let $G$ be a bipartite simple graph, with bipartition $(X,Y)$, where $\size{X} = \size{Y} = n$. If $\delta(G) \ge n/2$ then $G$ has a 1-factor. \label{hallc} \end{lemma}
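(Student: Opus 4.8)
The plan is to apply Hall's Theorem (Theorem \ref{hall}) to the bipartition $(X,Y)$. Since $\size{X} = \size{Y} = n$, any matching that covers every vertex of $X$ must also cover every vertex of $Y$, and so is a 1-factor. It therefore suffices to verify Hall's condition $\size{N(S)} \ge \size{S}$ for every $S \subseteq X$.

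I would verify the condition by splitting on the size of $S$ relative to $n/2$. If $\size{S} \le n/2$, then picking any single vertex $v \in S$ already gives $\size{N(S)} \ge \size{N(v)} = d(v) \ge n/2 \ge \size{S}$, since $N(v) \subseteq N(S)$, the graph is simple, and $\delta(G) \ge n/2$. If instead $\size{S} > n/2$, I would argue that $N(S) = Y$: were there a vertex $y \in Y \setminus N(S)$, it would have no neighbours in $S$, so its neighbours all lie in $X \setminus S$, forcing $d(y) \le n - \size{S} < n/2$ and contradicting $\delta(G) \ge n/2$. Hence $\size{N(S)} = n \ge \size{S}$ in this case as well.

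With Hall's condition established in both cases, Theorem \ref{hall} supplies a matching covering $X$, which as noted above is the desired 1-factor. There is no substantial obstacle here; the only point requiring slight care is the threshold argument in the large-$\size{S}$ case, where the bound $d(y) \le n - \size{S}$ relies on observing that $y$ can be joined only to vertices of $X \setminus S$.
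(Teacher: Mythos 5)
Your proof is correct and is essentially the same as the paper's: both verify Hall's condition using the two observations that $\delta(G) \ge n/2$ forces $\size{N(S)} \ge n/2$ for nonempty $S$, and that when $\size{S} > n/2$ no vertex of $Y$ can avoid $S$ entirely. The paper merely phrases this as a proof by contradiction rather than a direct case split.
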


\begin{proof} Suppose $G$ satisfies the assumptions but does not have a 1-factor. Then by Theorem \ref{hall} there is a set $X' \subseteq X$ with neighbour set $Y' \subseteq Y$ such that $\size{X'} > \size{Y'}$. But $\delta(G) \ge n/2$, so $\size{Y'} \ge n/2$, and so $\size{X'} > n/2$. But then any vertex in $Y-Y'$ must be adjacent to at least one vertex in $X'$, which contradicts $Y'$ being the neighbour set of $X'$. \end{proof}

We can extend this result to bipartite multigraphs as follows:

\begin{lemma} Let $G$ be a bipartite graph of multiplicity at most $r$, with bipartition $(X,Y)$, where $\size{X} = \size{Y}$ = n. If $\delta(G) \ge rn/2$ then $G$ has a 1-factor. \label{hallcm} \end{lemma}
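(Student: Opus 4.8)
The plan is to reduce this multigraph statement to the simple-graph version already established in Lemma \ref{hallc}. The crucial point is that bounding the multiplicity by $r$ lets us convert the large degree hypothesis into a statement about the number of \emph{distinct} neighbours of each vertex. So the first step I would take is to form the underlying simple graph $G'$ on the same vertex set $X \cup Y$, where $x \in X$ and $y \in Y$ are joined by a single edge in $G'$ precisely when they are joined by at least one edge in $G$. This $G'$ is bipartite and simple, with the same bipartition $(X,Y)$ and $\size{X} = \size{Y} = n$, so it is exactly the kind of graph to which Lemma \ref{hallc} applies.

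The second step is the degree count. Fix a vertex $v$, say $v \in X$. In $G$ each of its neighbours is joined to $v$ by at most $r$ parallel edges, so the number of distinct neighbours of $v$ is at least $d(v)/r \ge (rn/2)/r = n/2$. Since this holds for every vertex, $\delta(G') \ge n/2$, and Lemma \ref{hallc} yields a 1-factor $F'$ of $G'$. Finally I would lift $F'$ back to $G$: each edge of $F'$ corresponds, by construction of $G'$, to at least one edge of $G$ joining the same pair of vertices, so choosing one such representative edge of $G$ for each edge of $F'$ produces $n$ pairwise non-adjacent edges of $G$ covering every vertex, i.e.\ a 1-factor of $G$.

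I do not expect a genuine obstacle here; the argument is essentially a routine ``collapse parallel edges'' reduction. The only points that need a moment's care are verifying that the minimum-degree hypothesis survives the passage to $G'$ (which is where the factor of $r$ from the multiplicity bound is consumed exactly) and noting that the lifting step preserves the matching property, since distinct edges of $F'$ meet distinct vertex pairs and hence their chosen representatives in $G$ remain pairwise non-adjacent.
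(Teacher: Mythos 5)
Your proposal is correct and follows exactly the paper's own argument: collapse parallel edges to form the underlying simple graph $G'$, observe that the multiplicity bound $r$ converts $\delta(G) \ge rn/2$ into $\delta(G') \ge n/2$, apply Lemma \ref{hallc}, and lift the resulting 1-factor back to $G$. The paper's proof is just a terser version of the same reduction, so there is nothing to add.
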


\begin{proof} Let $G'$ be the simple graph obtained from $G$ by replacing sets of parallel edges with single edges. As $G$ has multiplicity at most $r$, $\delta(G') \ge n/2$, and so $G'$ has a 1-factor by Lemma \ref{hallc}. Since any 1-factor of $G'$ is also a 1-factor of $G$, the result follows. \end{proof}

We shall need the following version of the Chernoff bound. (See e.g. Theorem A.1.16 of \cite{alonspencer}.)

\begin{theorem} Let $X_1, \dots, X_n$ be mutually independent random variables that satisfy $\ex(X_i) = 0$ and $\size{X_i} \le 1$ for all $1 \le i \le n$. Set $S = X_1 + \dots + X_n$. Then for any $a > 0$,
\[ \pr(S > a) < e^{-a^2/2n}. \]
\label{chernoff} \end{theorem}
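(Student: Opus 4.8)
The plan is to use the standard exponential moment method (the ``Bernstein trick''). For any parameter $\lambda > 0$, the map $x \mapsto e^{\lambda x}$ is strictly increasing, so the event $S > a$ coincides with the event $e^{\lambda S} > e^{\lambda a}$; applying Markov's inequality to the nonnegative random variable $e^{\lambda S}$ then gives $\pr(S > a) \le e^{-\lambda a}\,\ex(e^{\lambda S})$. Because the $X_i$ are mutually independent, so are the $e^{\lambda X_i}$, and the moment generating function factorizes as $\ex(e^{\lambda S}) = \prod_{i=1}^{n} \ex(e^{\lambda X_i})$. The whole argument thus reduces to bounding a single factor $\ex(e^{\lambda X_i})$ and then choosing $\lambda$ optimally.

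First I would establish the key estimate that for any random variable $X$ with $\ex(X) = 0$ and $\size{X} \le 1$ one has $\ex(e^{\lambda X}) \le e^{\lambda^2/2}$. The idea is convexity. Since $e^{\lambda x}$ is convex in $x$, on the interval $[-1,1]$ it lies below the chord through its endpoints, giving the pointwise bound $e^{\lambda x} \le \tfrac{1-x}{2}e^{-\lambda} + \tfrac{1+x}{2}e^{\lambda}$ for all $-1 \le x \le 1$; this applies to $X$ since $\size{X} \le 1$ forces $X \in [-1,1]$ almost surely. Taking expectations and using $\ex(X)=0$ to kill the linear term leaves $\ex(e^{\lambda X}) \le \tfrac{1}{2}(e^{\lambda}+e^{-\lambda}) = \cosh\lambda$. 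Comparing Taylor series termwise, $\cosh\lambda = \sum_{k\ge 0}\lambda^{2k}/(2k)!$ and $e^{\lambda^2/2} = \sum_{k\ge 0}\lambda^{2k}/(2^k k!)$, and since $(2k)!/(2^k k!) = (2k-1)(2k-3)\cdots 3\cdot 1 \ge 1$ for every $k$ (with strict inequality once $k \ge 2$), the coefficients of $\cosh\lambda$ are dominated by those of $e^{\lambda^2/2}$, whence $\cosh\lambda \le e^{\lambda^2/2}$, strictly when $\lambda \ne 0$.

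Combining these, for every $\lambda > 0$ I obtain $\pr(S > a) \le e^{-\lambda a}\prod_{i=1}^{n}\ex(e^{\lambda X_i}) \le e^{-\lambda a}\,e^{n\lambda^2/2}$. It remains to minimize the exponent $-\lambda a + n\lambda^2/2$ over $\lambda > 0$; differentiating shows the minimum occurs at $\lambda = a/n > 0$, where the exponent equals $-a^2/(2n)$, yielding $\pr(S > a) \le e^{-a^2/2n}$. The strict inequality demanded by the statement comes for free at this optimal $\lambda = a/n \ne 0$: there the comparison $\cosh\lambda < e^{\lambda^2/2}$ is strict, so each factor satisfies $\ex(e^{\lambda X_i}) < e^{\lambda^2/2}$ and the product of these positive quantities is strictly less than $e^{n\lambda^2/2}$. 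I do not anticipate a genuine obstacle here; the only points needing mild care are the justification of the pointwise chord bound over the full almost-sure range of $X$, and the elementary factorial comparison $(2k)! \ge 2^k k!$, both of which are routine.
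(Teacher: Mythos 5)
Your proof is correct, including the attention to the strict inequality via the termwise comparison $(2k)! \ge 2^k k!$ in $\cosh\lambda \le e^{\lambda^2/2}$. The paper does not prove this theorem itself---it quotes it from Alon and Spencer (Theorem A.1.16)---and your argument (Markov's inequality applied to $e^{\lambda S}$, factorization by independence, the chord bound $\ex(e^{\lambda X_i}) \le \cosh\lambda$ from convexity, and the optimal choice $\lambda = a/n$) is precisely the standard proof given in that cited source, so there is nothing to bridge or correct.
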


Applying Theorem \ref{chernoff} to $S$ and $-S$ we obtain:

\begin{corollary} Let $X_1, \dots, X_n$ be as in Theorem \ref{chernoff}. Then,
\[ \pr(\size{S} > a) < 2e^{-a^2/2n}.\] \label{chernoff2}
\end{corollary}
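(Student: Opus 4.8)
The plan is to derive this two-sided bound directly from the one-sided bound in Theorem \ref{chernoff} by applying that theorem separately to $S$ and to $-S$, then combining the two resulting tail estimates with the union bound. The underlying observation is that the event $\{\size{S} > a\}$ is the disjoint union $\{S > a\} \cup \{S < -a\}$, so it suffices to bound each of these two pieces by $e^{-a^2/2n}$ and add.

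First I would bound the upper tail. Theorem \ref{chernoff} applies to $S = X_1 + \dots + X_n$ verbatim, giving $\pr(S > a) < e^{-a^2/2n}$ for any $a > 0$. For the lower tail I would pass to the reflected variables $Y_i = -X_i$. The point is that these again satisfy the hypotheses of Theorem \ref{chernoff}: they are mutually independent, being functions of the independent $X_i$; they have mean $\ex(Y_i) = -\ex(X_i) = 0$; and they obey $\size{Y_i} = \size{X_i} \le 1$. Setting $T = Y_1 + \dots + Y_n = -S$, Theorem \ref{chernoff} then yields $\pr(-S > a) < e^{-a^2/2n}$, which is exactly $\pr(S < -a) < e^{-a^2/2n}$.

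Finally, since $\{S > a\}$ and $\{S < -a\}$ are disjoint events whose union is $\{\size{S} > a\}$, adding the two bounds gives $\pr(\size{S} > a) = \pr(S > a) + \pr(S < -a) < 2e^{-a^2/2n}$, as required. There is no genuine obstacle in this argument; the only points needing care are verifying that the reflected variables $-X_i$ really do meet the hypotheses of Theorem \ref{chernoff} (so that the lower tail bound is legitimate rather than assumed) and observing that the sum of two strict inequalities remains strict.
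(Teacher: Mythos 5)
Your proof is correct and follows the paper's approach exactly: the paper likewise obtains the corollary by applying Theorem \ref{chernoff} to $S$ and to $-S$ and summing the two tail bounds. Your additional verification that the reflected variables $-X_i$ satisfy the theorem's hypotheses is a sound elaboration of the step the paper leaves implicit.
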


\section{Proof of Theorem \ref{ma1fact}}

The following lemma states that for any fixed $r$, when $n$ is sufficiently large, the vertices of a graph $G$ of order $2n$ with multiplicity at most $r$ can be partitioned into two parts $A$ and $B$ such that for each vertex $v$, $d_A(v)$ and $d_B(v)$ are approximately equal.

\begin{lemma} For all positive integers $r$, there is an integer $N^* = N^*(r)$ such that for all $n \ge N^*$ the vertex set of any graph $G$ of order $2n$ with multiplicity at most $r$ can be partitioned into two equal parts $A$ and $B$ such that for any vertex $v$ we have
\begin{equation}
\size{d_A(v) - d_B(v)} < n^{2/3}. \label{splitseq}
\end{equation}
\label{splits} \end{lemma}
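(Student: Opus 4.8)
The plan is to choose the partition at random and show that, with high probability, every vertex satisfies the required inequality. The only real subtlety is the constraint that $A$ and $B$ must be \emph{exactly} equal: if we simply assigned each vertex to $A$ or $B$ independently, the resulting parts would typically differ in size by order $\sqrt{n}$, and rebalancing them afterwards would perturb the quantities $d_A(v) - d_B(v)$. To obtain equal parts for free, I would first fix an arbitrary partition of $V(G)$ into $n$ pairs $\{a_i, b_i\}$, and then, independently for each $i$, toss a fair coin to decide which of $a_i, b_i$ is placed in $A$ and which in $B$. This guarantees $\size{A} = \size{B} = n$ automatically, while leaving exactly one independent source of randomness per pair.

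Next I would express $d_A(v) - d_B(v)$ as a sum of independent, bounded, mean-zero contributions. Fix a vertex $v$, say with $v$ in pair $i$, and for each vertex $u$ write $Z_u = +1$ if $u \in A$ and $Z_u = -1$ if $u \in B$, so that $d_A(v) - d_B(v) = \sum_{u} m(v,u) Z_u$, where $m(v,u)$ denotes the number of edges between $v$ and $u$. Grouping the sum by pairs, the contribution of a pair $j \ne i$ is $W_j := \epsilon_j\bigl(m(v,a_j) - m(v,b_j)\bigr)$, where $\epsilon_j = \pm 1$ is the coin for pair $j$; since the multiplicity is at most $r$ we have $\size{W_j} \le r$, the $W_j$ have mean zero, and they are mutually independent. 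The contribution of pair $i$ itself involves only the edges from $v$ to its partner, and so is bounded in absolute value by $r$ regardless of the coins. Hence $\size{d_A(v) - d_B(v)} \le r + \size{\sum_{j \ne i} W_j}$. Normalising the $W_j$ by dividing by $r$ so that they satisfy the hypotheses of Theorem~\ref{chernoff}, Corollary~\ref{chernoff2} gives $\pr\bigl(\size{d_A(v) - d_B(v)} \ge n^{2/3}\bigr) < 2\exp\bigl(-(n^{2/3}-r)^2/(2r^2 n)\bigr)$, since there are fewer than $n$ terms in the sum.

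Finally I would take a union bound over all $2n$ vertices. The exponent above is asymptotically $-n^{1/3}/(2r^2)$, so the total failure probability is at most $4n\exp\bigl(-n^{1/3}/(4r^2)\bigr)$ for large $n$, and this tends to $0$ as $n \to \infty$ because the exponential decay beats the polynomial factor. Thus there is an $N^*(r)$ such that for all $n \ge N^*$ this probability is strictly less than $1$, which means some outcome of the coin tosses yields a partition in which every vertex satisfies (\ref{splitseq}); and by construction this partition has $\size{A} = \size{B} = n$. The one genuine obstacle is reconciling the equal-size requirement with the independence needed for the Chernoff bound, and the pairing device above is exactly what removes it; once that is in place the remainder is a routine concentration-plus-union-bound computation.
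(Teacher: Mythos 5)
Your proposal is correct and follows essentially the same route as the paper: the same device of fixing $n$ pairs and flipping one fair coin per pair to get exactly equal parts, followed by the same Chernoff bound (Corollary~\ref{chernoff2}) and union bound over the $2n$ vertices. The only cosmetic difference is that you split off the pair containing $v$ and absorb its contribution as an additive $r$, whereas the paper simply includes that pair in the sum (which works because $m(vv)=0$, there being no loops); both handlings are valid.
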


First, two remarks:

\begin{enumerate}
\item It is possible to replace $n^{2/3}$ with $\sqrt{n \log n}$.
\item The case $r = 1$ follows from a hypergeometric version of the Chernoff bound given by Chv\'atal \cite{chvatal}.
\end{enumerate}

\begin{proof}[Proof of Lemma \ref{splits}] Let $G$ be a graph of order $2n$ with multiplicity at most $r$. We shall show that provided $n$ is large enough, there is a method for randomly choosing a partition of $V(G)$ into two equal parts $A$ and $B$, such that with positive probability, (\ref{splitseq}) holds for every $v \in V(G)$.

Suppose we have partitioned $V(G)$ into $n$ pairs in an arbitrary way. We then assign one vertex of each pair to $A$ and the other to $B$ uniformly at random. Suppose the pairs are $(a_1, b_1), \dots, (a_n, b_n)$. Fix a vertex $v$, and define the random variables $X_1, \dots, X_n$ by the rule that
\[ X_i = \frac{m(v a_i) - m(v b_i)}{r}, \]
where $m(vx)$ denotes the number of edges between $v$ and $x$. Then $X_1, \dots, X_n$ are mutually independent, and for all $1 \le i \le n$, $\ex(X_i) = 0$ and $\size{X_i} \le 1$. Let $S = X_1 + \dots + X_n$. Then
\[ d_A(v) - d_B(v) = r S.\]
By Corollary \ref{chernoff2},
\begin{align*}
\pr(\size{d_A(v) - d_B(v)} > n^{2/3}) &= \pr(\size{S} > r^{-1}n^{2/3}) \\
 &< 2e^{-\tfrac{1}{2n}(r^{-1}n^{2/3})^2} \\
&= 2e^{-\tfrac{1}{2}r^{-2}n^{1/3}}.
\end{align*}
There are $2n$ vertices, so the probability $p$ that there is a vertex $v$ for which (\ref{splitseq}) does not hold is less than
\[ 4ne^{-\tfrac{1}{2}r^{-2}n^{1/3}}, \]
which tends to 0 as $n \rightarrow \infty$. Hence if $n$ is large enough, we can be certain that $p < 1$, and so there must be some partition of $V(G)$ into two equal parts $A$ and $B$ such that (\ref{splitseq}) holds for every $v \in V(G)$. \end{proof}

The proof of the following lemma will be deferred until Section \ref{keysection}:

\begin{lemma} Let $G$ be a regular graph of order $2n$ with multiplicity at most $r$, where $n^{5/6} > 3r$. If the vertex set can be partitioned into two equal parts $A$ and $B$ such that every vertex $v$ has $d_A(v) > rn/2 + 14rn^{5/6}$ and $d_B(v) > rn/2 + 14rn^{5/6}$, and where
\[ \max\{\Delta(G_A), \Delta(G_B)\} - \min\{\delta(G_A), \delta(G_B)\} < n^{2/3}, \]
then $G$ is 1-factorizable. \label{keylemma}
\end{lemma}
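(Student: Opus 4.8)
The plan is to use the fact that, since $G$ is $d$-regular with $d = d(G)$, any proper edge-colouring of $G$ with exactly $d$ colours is automatically a 1-factorization: every vertex then meets $d$ edges of distinct colours, so each colour class saturates every vertex. Thus it suffices to produce a proper $d$-edge-colouring. I will build one in two stages, colouring the edges inside $A$ and inside $B$ first and the bipartite graph $H$ of edges between $A$ and $B$ second. The reason for this order is that $G_A$ and $G_B$ need not be bipartite, so I want them coloured while I still have complete freedom; the residual constraints will then fall on $H$, which is bipartite and where K\"onig's Theorem (Theorem \ref{konig}) and Hall's Theorem (Theorem \ref{hall}) are available.

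For the first stage, note that for $v \in A$ the internal degree is $d_A(v) = d - d_B(v)$, so $\Delta(G_A) < d - rn/2 - 14rn^{5/6}$; with Theorem \ref{vizing} this gives $\chi'(G_A) < d$, and likewise $\chi'(G_B) < d$. Hence Theorem \ref{mcdiarmid} lets me colour each of $G_A$ and $G_B$ with $d$ colours in an equalized way. The key structural point is that regularity forces $\size{E(G_A)} = \size{E(G_B)}$: both equal $\tfrac12(nd - e(A,B))$, where $e(A,B) = \sum_{v\in A} d_B(v) = \sum_{v\in B} d_A(v)$ is the number of edges of $H$. Consequently the two equalized colourings have identical colour-class-size multisets, so after relabelling the colours of $G_B$ I may assume that for every colour $c$ the colour-$c$ class has the same size in $G_A$ as in $G_B$. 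Writing $A_c$ and $B_c$ for the sets of vertices of $A$ and $B$ \emph{missed} by colour $c$ internally, this yields $\size{A_c} = \size{B_c}$ for all $c$; equalization keeps each of these within a bounded amount of a common value that is a constant fraction of $n$, and the near-regularity hypothesis keeps the internal degrees clustered in a window of width $n^{2/3}$, so that a single error margin will serve uniformly across all colours and vertices.

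It now remains to colour $H$ so that each vertex $v$ uses exactly the colours missing from it internally. Since $v$ has $d_B(v)$ bipartite edges (for $v \in A$) and is missed internally by exactly $d_B(v)$ colours, this amounts to decomposing $H$ into matchings $N_1, \dots, N_d$ where $N_c$ is a perfect matching between $A_c$ and $B_c$. Each such $N_c$ completes colour $c$ to a perfect matching of $G$: the internal edges of colour $c$ saturate $A \setminus A_c$ and $B \setminus B_c$, while $N_c$ saturates $A_c$ and $B_c$, and the two parts are vertex-disjoint. To make each $H[A_c, B_c]$ dense enough for Hall's condition I will choose the internal colouring with enough randomness that the colour classes, hence the sets $A_c$ and $B_c$, behave like random subsets. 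Then for fixed $c$ and $v \in A_c$ the number of edges from $v$ into $B_c$ has mean about $d_B(v)\,\size{B_c}/n > (rn/2 + 14rn^{5/6})\size{B_c}/n = r\size{B_c}/2 + 14rn^{5/6}\size{B_c}/n$, exceeding $r\size{B_c}/2 = r\size{A_c}/2$ with room to spare. By Corollary \ref{chernoff2} the deviation from this mean is only $O(\sqrt{n\log n})$, far below the margin $14rn^{5/6}\size{B_c}/n = \Omega(rn^{5/6})$, so a union bound over the $2n$ vertices and the $d$ colours shows that with positive probability every $H[A_c, B_c]$ has minimum degree above $r\size{A_c}/2$, whence each has a perfect matching by Lemma \ref{hallcm}.

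The step I expect to be the main obstacle, and the one the deferred Section must carry out in detail, is not the existence of any single $N_c$ but the \emph{simultaneous, edge-disjoint} realization of all of them, so that together they partition $E(H)$. Removing one colour's matching can in principle destroy Hall's condition for the colours still to come, so I cannot simply extract the $N_c$ one at a time from a fixed graph. The plan is to keep the removals balanced: process the colours in a random order while maintaining the invariant that, at each stage, the degree of every remaining vertex into every not-yet-used set $B_{c'}$ still exceeds $r\size{A_{c'}}/2$. Because each vertex loses exactly one incident edge of $H$ for each colour it belongs to, the generous slack of $14rn^{5/6}$ (against the $O(n^{2/3})$ equalization error and the $O(\sqrt{n\log n})$ random fluctuation, again controlled through Corollary \ref{chernoff2}) should be precisely what guarantees that Hall's condition survives throughout the process, so that the decomposition can be completed and $G$ thereby properly $d$-edge-coloured.
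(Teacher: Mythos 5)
Your reduction to decomposing the bipartite graph $H$ into perfect matchings $N_c$ between the internally-missed sets $A_c$ and $B_c$ is set up correctly (regularity does give $\size{E(G_A)} = \size{E(G_B)}$, hence $\size{A_c} = \size{B_c}$ after aligning class sizes, and the total demand $\sum_c \size{A_c}$ exactly equals $\size{E(H)}$). But the step you yourself flag as the crux --- the simultaneous, edge-disjoint realization of all $d$ matchings --- is a genuine gap, and the mechanism you propose cannot close it. The invariant ``every remaining vertex has degree more than $r\size{A_{c'}}/2$ into every not-yet-used $B_{c'}$'' is impossible to maintain: since each vertex $v \in A$ has $H$-degree $d_B(v)$ and lies in exactly $d_B(v)$ of the sets $A_c$, its remaining degree always equals the number of its unprocessed colours. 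So when the last colour containing $v$ comes up, $v$ has exactly one remaining edge; the slack is not merely small at the end, it is zero, and Hall's condition with any margin must fail. The endgame can genuinely get stuck: if the last remaining edge is $uv$ with $u$'s last colour different from $v$'s last colour, no completion exists. A secondary gap is the randomness claim: the colour classes of an equalized edge-colouring are adversarial, not random, subsets; Corollary \ref{chernoff2} requires a sum of independent random variables, and randomly pairing classes of $G_A$ with classes of $G_B$ does not make $B_c$ a uniformly random subset, so the claimed minimum-degree bound for $H[A_c,B_c]$ is unsupported even for a single colour.

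The paper avoids exactly this trap by never asking the bipartite graph to satisfy a tight, zero-slack decomposition. It colours $G_A$ and $G_B$ with only $k = \max\{\Delta(G_A),\Delta(G_B)\} + r \approx d/2$ colours (not $d$), completes each of those classes to a 1-factor by exchanging five-edge alternating paths through $C$ (displacing a few internal edges into reserves $R_A, R_B$), then spends $j = \ceiling{n^{5/6}} + r + 1$ further colours on the reserves, completing each via Lemma \ref{hallcm} while plenty of slack remains. The decisive point is the finish: because every completed colour class is a 1-factor, the uncoloured remainder of $C$ is \emph{regular} and bipartite, so Theorem \ref{konig} 1-factorizes it with no minimum-degree or Hall-type slack needed at all. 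That regularity-based endgame is precisely the mechanism your approach lacks, and without it (or some substitute idea) the proposal does not yield a proof.
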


Note that Lemma \ref{keylemma} is a purely deterministic result, which applies to every graph satisfying the conditions. Indeed, our proof gives a deterministic algorithm for finding a 1-factorization of such a graph.

\begin{proof}[Proof of Theorem \ref{ma1fact}] Let $r$ be a positive integer and $\epsilon > 0$. Let $n$ be large enough so that $n^{5/6} > 3r$, $n \ge N^*(r)$ of Lemma \ref{splits} and
\begin{equation} (1 + \epsilon)rn > (1 + 29n^{-1/6})rn = rn + 29rn^{5/6}. \label{epsiloncond} \end{equation}
Suppose $G$ is a regular graph of order $2n$ with multiplicity at most $r$ and degree at least $(1 + \epsilon)rn$. By Lemma \ref{splits} we can partition the vertex set of $G$ into two equal parts $A$ and $B$ such that for every vertex $v$ we have
\[ \size{d_A(v) - d_B(v)} < n^{2/3}. \]
Since $G$ is regular of degree $d = d(G)$, and for every vertex $v$, $d_A(v) + d_B(v) = d$, we have
\[ \frac{d - n^{2/3}}{2} < d_A(v) < \frac{d + n^{2/3}}{2} \]
and so
\[ \frac{d - n^{2/3}}{2} < \delta(G_A) \le \Delta(G_A) < \frac{d + n^{2/3}}{2}. \]
Since the same is true of $G_B$, we have
\[ \max\{\Delta(G_A), \Delta(G_B)\} - \min\{\delta(G_A), \delta(G_B)\} < n^{2/3}. \]
By (\ref{epsiloncond}), $d(G) > rn + 28rn^{5/6} + n^{2/3}$, and so for every vertex $v$, we have
\[ d_A(v) > rn/2 + 14rn^{5/6} \enspace \text{and} \enspace d_B(v) > rn/2 + 14rn^{5/6}. \]
Thus $G$ is 1-factorizable by Lemma \ref{keylemma}.
\end{proof}

\section{Proof of Lemma \ref{keylemma}} \label{keysection}

In the course of the proof of Lemma \ref{keylemma} we shall be considering graphs where some of the edges are coloured and some are not. A path whose edges alternate between uncoloured edges and edges coloured $c$, for some colour $c$, will be called an \textit{alternating path}. To \textit{exchange} an alternating path $P$ means to uncolour the edges of $P$ that were previously coloured $c$, and to colour with $c$ the edges of $P$ that were previously uncoloured.

\begin{proof}[Proof of Lemma \ref{keylemma}] Suppose we have a regular graph $G$ of order $2n$ and a partition of its vertex set into two equal parts $A$ and $B$ such that the conditions in the statement of the lemma are satisfied. The subgraphs of $G$ induced by $A$ and $B$ will be denoted $G_A$ and $G_B$. Let $C$ be the subgraph of $G$ consisting of the edges that are not in $G_A$ or $G_B$. So $C$ is a bipartite graph containing the edges of $G$ that join a vertex in $A$ with a vertex in $B$.

To prove the lemma, we shall show that it is possible to find an edge-colouring of $G$ with $d(G)$ colours. In fact, we shall give a procedure for finding such an edge-colouring. The procedure is a little technical, so we shall first give an overview of the steps involved. We are not interested in efficiency, merely in the fact that the procedure can be carried out. At the start of the procedure, all the edges of $G$ are assumed to be uncoloured.

\begin{enumerate}[{Step} 1.]
\item We shall find equalized edge-colourings of $G_A$ and $G_B$ with $k$ colours, where $k = \max\{\Delta(G_A), \Delta(G_B)\} + r$. In this partial edge-colouring of $G$, we shall insist that each colour misses the same number of vertices in $A$ as it does in $B$, and that the number of vertices missed in each part is less than $2n^{2/3} + 3$.

\item We shall modify the partial edge-colouring of $G$ obtained in Step~1 by exchanging alternating paths. Once this step has been completed, each of the $k$ colour classes will be a 1-factor of $G$. During the course of Step~2, we shall colour a few of the edges of $C$, and we shall uncolour a few of the edges of $G_A$ and $G_B$ that were coloured in Step~1. We shall ensure that after Step~2 has been completed, the following three conditions hold:

\begin{enumerate}[(i)]
\item $G_A$ and $G_B$ contain the same number of uncoloured edges, and this number is less than $2n^{5/3}$.
\item If $R_A$ and $R_B$ denote the subgraphs of $G_A$ and $G_B$ respectively consisting of the uncoloured edges, both $R_A$ and $R_B$ have maximum degree less than $n^{5/6} + 1$.
\item Each vertex is incident with fewer than $3n^{5/6}$ coloured edges of $C$.
\end{enumerate}

\item We shall find equalized edge-colourings of $R_A$ and $R_B$ with exactly $j = \ceiling{n^{5/6}} + r + 1$ colours. We shall then colour some of the uncoloured edges of $C$ with these $j$ colours, so that each of the $j$ colour classes is a 1-factor of $G$. At the end of Step~3, all the edges in $G_A$ and $G_B$ will be coloured, and so will a few of the edges of $C$. Each of the $k + j$ colour classes will be a 1-factor of $G$.

\item At the start of Step~4, all of the edges that remain uncoloured belong to $C$. Also, each colour class is a 1-factor, so the subgraph of $G$ consisting of the uncoloured edges is regular, of degree $d(G) - k - j$. This subgraph is bipartite, so by Theorem~\ref{konig} (K\"onig's Theorem) we can colour its edges with $d(G)-k-j$ colours.
\end{enumerate}

At the conclusion of Step~4, all the edges of $G$ will have been coloured, with $d(G)$ colours. We shall now describe the steps in detail.

\begin{center}Step~1.\end{center}

Let $k = \max\{\Delta(G_A), \Delta(G_B)\} + r$. By Theorem \ref{vizing}, $\chi'(G_A)$ and $\chi'(G_B)$ are at most $k$, so by Theorem \ref{mcdiarmid}, we can find equalized edge-colourings of $G_A$ and $G_B$ using $k$ colours $c_1, \dots, c_k$. Note that $k > \delta(G_A) > rn / 2$.

As $G$ is regular, $G_A$ has the same number of edges as $G_B$, which we shall suppose is $m$. As the edge-colourings of $G_A$ and $G_B$ are equalized, each colour appears on either $\floor{m/k}$ or $\ceiling{m/k}$ edges. In our edge-colourings, we shall insist that each colour appears the same number of times on edges of $G_A$ as it does on edges of $G_B$. We can do this because, as $G_A$ and $G_B$ each has $m$ edges, the number of colours that appear on $\floor{m/k}$ edges of $G_A$ equals the number of colours that appear on $\floor{m/k}$ edges of $G_B$ (and similarly for the number of colours that appear on $\ceiling{m/k}$ edges).

It follows from the assumption that
\[ \max\{\Delta(G_A), \Delta(G_B)\} - \min\{\delta(G_A), \delta(G_B)\} < n^{2/3} \]
that the number of colours that miss a given vertex in $A$ is always less than $n^{2/3} + r$. So the average number of vertices in $A$ that a colour misses is less than
\[ \frac{n(n^{2/3} + r)}{k} < \frac{n(n^{2/3} + r)}{rn / 2} \le 2n^{2/3} + 2. \]
As any two colour classes differ in size by at most one, in our partial edge-colouring of $G$, each colour misses fewer than $2n^{2/3} + 3$ vertices in $A$. (And clearly the same holds for vertices in $B$.)

\begin{figure}
\begin{center} \begin{tikzpicture}
\fill[gray!10,rounded corners=5mm] (0,0) rectangle +(2,5) (4,0) rectangle +(2,5);
\draw[gray, dashed, thick] (1,1)--(5,2.5) (5,1)--(1,2.5) (1,4)--(5,4);
\draw[gray, thick] (1,2.5)--(1,4) (5,2.5)--(5,4);
\fill[black] (1,1) circle (3pt) node[left] {$a$}
(5,1) circle (3pt) node[right] {$b$}
(1,2.5) circle (3pt) node[left] {$a_1$}
(1,4) circle (3pt) node[left] {$a_2$}
(5,2.5) circle (3pt) node[right] {$b_1$}
(5,4) circle (3pt) node[right] {$b_2$};
\draw[black] (1,0) node[below] {$A$} (5,0) node[below] {$B$};
\end{tikzpicture} \end{center} \caption{The alternating path $P$. Dashed lines indicate uncoloured edges, and solid lines indicate edges coloured $c_i$.} \label{pdiagram}
\end{figure}
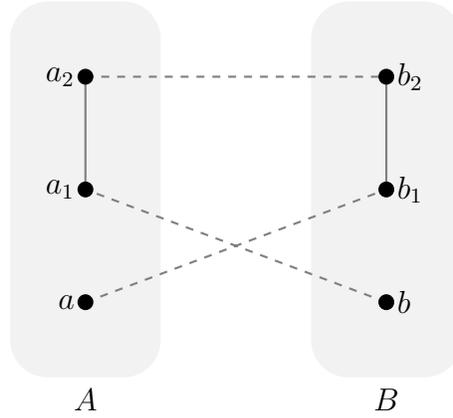

\begin{center}Step~2.\end{center}

We shall show that by exchanging alternating paths we can increase the size of the colour classes until each colour class is a 1-factor of $G$. During the course of Step~2, we shall uncolour some of the edges of $G_A$ and $G_B$, and we shall colour some of the edges of $C$. We shall denote by $R_A$ and $R_B$ the subgraphs of $G_A$ and $G_B$ consisting of their uncoloured edges. During the course of Step~2 the graphs $R_A$ and $R_B$ will change. They will initially be empty, but each time we exchange an alternating path, one edge will be added to each of $R_A$ and $R_B$.

We shall ensure that, after Step~2 has been completed, the following three conditions hold:
\begin{enumerate}[(i)]
\item $G_A$ has the same number of uncoloured edges as $G_B$, and this number is less than $2n^{5/3}$.
\item $\Delta(R_A)$ and $\Delta(R_B)$ are less than $n^{5/6} + 1$.
\item Each vertex is incident with fewer than $3n^{5/6}$ coloured edges of $C$.
\end{enumerate}

With condition (ii) in mind, we say that an edge is \emph{good} if it is not in $R_A$ or $R_B$, and both its ends have degree less than $n^{5/6}$ in $R_A$ or $R_B$. Thus we may add a good edge to $R_A$ or $R_B$ without violating condition (ii).

Our strategy is as follows. We shall consider the $k$ colours $c_1, \dots, c_k$ in turn. Each colour misses the same number of vertices in $A$ as it does in $B$. So for a given colour $c_i$, where $1 \le i \le k$, we can partition the vertices that miss $c_i$ into pairs, with one vertex from each pair belonging to $A$ and the other belonging to $B$. We shall exchange exactly one alternating path for each such pair. Suppose $(a, b)$ is one of our pairs, where $a \in A$, $b \in B$, and both vertices miss the colour $c_i$. We shall exchange an alternating path $P$ from $a$ to $b$, consisting of five edges, where the first, third and fifth edges are uncoloured and the second and fourth edges are good edges coloured $c_i$. (See Figure \ref{pdiagram}.) After $P$ is exchanged, $a$ and $b$ will be incident with edges of colour $c_i$, and one good edge will be added to each of $R_A$ and $R_B$.

Before demonstrating how such paths can be found, we shall show that at the end of Step~2, we can be sure that conditions (i), (ii) and (iii) will hold. After Step~1 has been completed, each vertex is missed by fewer than $n^{2/3} + r$ colours, so there will always be fewer than $n(n^{2/3} + r) < 2n^{5/3}$ edges in each of $R_A$ and $R_B$. Therefore at the end of Step~2, condition (i) will hold. And as we only ever add good edges to $R_A$ and $R_B$, condition (ii) will also hold.

We shall now show that condition (iii) will also hold. Let $v$ be a vertex, which, without loss of generality, we assume belongs to $A$. After Step~2 has been completed, the number of coloured edges of $C$ that are incident with $v$ will be equal to the number of alternating paths containing $v$ that have been exchanged. The number of such alternating paths of which $v$ is the first vertex will be equal to the number of colours that missed $v$ at the end of Step~1, which is less than $n^{2/3} + r$. The number of alternating paths in which $v$ is the fourth or fifth vertex will be equal to the degree of $v$ in $R_A$, and so will be less than $n^{5/6} + 1$. Hence the number of coloured edges of $C$ that are incident with $v$ will be less than
\[ (n^{2/3} + r) + (n^{5/6} + 1) < 3n^{5/6}. \]
This applies to all vertices in $G$, and so condition (iii) will be satisfied.

We shall now describe how the paths can be found. Suppose $(a, b)$ is one of our pairs, where $a \in A$, $b \in B$, and both vertices miss the colour $c_i$. Let $N_B$ be the set of vertices in $B$ that are joined with $a$ by an uncoloured edge and are incident with a good edge coloured $c_i$. Likewise, let $N_A$ be the set of vertices in $A$ that are joined with $b$ by an uncoloured edge and are incident with a good edge coloured $c_i$.

There are fewer than $2n^{5/3}$ edges in $R_B$, so there are fewer than $4n^{5/6}$ vertices of degree at least $n^{5/6}$ in $R_B$, and hence there are fewer than $8n^{5/6}$ vertices in $B$ that are incident with a non-good edge coloured $c_i$. In addition, there are fewer than $2n^{2/3} + 3$ vertices in $B$ that are missed by the colour $c_i$. So the number of vertices in $B$ that are not incident with a good edge coloured $c_i$ is less than
\[ 8n^{5/6} + 2n^{2/3} + 3 < 11n^{5/6}. \]
By symmetry, the same holds for vertices in $A$.

So for any vertex $v \in V(G)$, the number of edges that join $v$ with a vertex $w$ in the other part, where $w$ is incident with a good edge coloured $c_i$, is more than
\[ rn/2 + 14rn^{5/6} - 3n^{5/6} - 11rn^{5/6} \ge rn/2. \]
In particular, there are more than $rn/2$ edges joining $a$ with vertices in $N_B$, and more than $rn/2$ edges joining $b$ with vertices in $N_A$. And because $G$ has multiplicity at most $r$, it follows that $\size{N_A} > n/2$ and $\size{N_B} > n/2$.

Let $M_B$ be the set of vertices in $B$ that are joined with a vertex in $N_B$ by an edge of colour $c_i$, and let $M_A$ be the set of vertices in $A$ that are joined with a vertex in $N_A$ by an edge of colour $c_i$. Note that $M_B$ will have the same size as $N_B$, but some vertices may be in both (similarly with $M_A$ and $N_A$).

Suppose we choose a vertex $b_1 \in N_B$. Let $b_2 \in M_B$ be the vertex joined with $b_1$ by an edge of colour $c_i$. As each vertex in $M_B$ is joined with more than $n/2$ vertices in $A$ by uncoloured edges, and the size of $M_A$ is more than $n/2$, we must be able to find a vertex $a_2 \in M_A$ that is joined with $b_2$ by an uncoloured edge. Let $a_1 \in N_A$ be the vertex joined with $a_2$ by an edge of colour $c_i$. Then $P = a b_1 b_2 a_2 a_1 b$ is an alternating path of five edges, where the first, third and fifth edges are uncoloured and the second and fourth edges are good edges coloured $c_i$.

If we exchange $P$, the colour $c_i$ appears on edges incident with $a$ and $b$. By finding such paths for all pairs of vertices $(a,b)$ that miss $c_i$, we can increase the number of edges coloured $c_i$ until the colour class is a 1-factor of $G$. By doing this for all colours, we can make each of the $k$ colour classes a 1-factor of $G$.

\begin{center}Step~3.\end{center}

Each of the colour classes for the colours $c_1, \dots, c_k$ is now a 1-factor of $G$. We shall now consider the graphs $R_A$ and $R_B$ that consist of the uncoloured edges of $G_A$ and $G_B$ respectively. $R_A$ and $R_B$ each have fewer than $2n^{5/3}$ edges and maximum degree less than $n^{5/6} + 1$. Let $j = \ceiling{n^{5/6}} + r + 1$. By Theorems \ref{vizing} and \ref{mcdiarmid}, we can give $R_A$ and $R_B$ equalized edge-colourings with $j$ colours, $c_{k+1}, \dots, c_{k+j}$. As with the edge-colourings we found in Step~1, we shall insist that in the edge-colourings of $R_A$ and $R_B$, each colour appears on the same number of edges in $R_A$ as it does in $R_B$. We can do this because $R_A$ and $R_B$ have the same number of edges.

There are fewer than $2n^{5/3}$ edges in each of $R_A$ and $R_B$, and $j > n^{5/6}$, so each of the colours $c_{k+1}, \dots, c_{k+j}$ appears on fewer than
\[ \frac{2n^{5/3}}{j} + 1 < 3n^{5/6} \]
edges in each of $R_A$ and $R_B$. We shall now colour some of the edges of $C$ with the $j$ colours $c_{k+1}, \dots, c_{k+j}$ so that each of these colour classes becomes a 1-factor of $G$.

We shall perform the following procedure for each of the $j$ colours in turn. Given a colour $c_i$, where $k + 1 \le i \le k + j$, we let $A_i$ and $B_i$ be the sets of vertices in $A$ and $B$ respectively that are incident with edges coloured $c_i$. Note that $A_i$ and $B_i$ have the same size, and as $R_A$ and $R_B$ each contain fewer than $3n^{5/6}$ edges coloured $c_i$, $A_i$ and $B_i$ contain fewer than $6n^{5/6}$ vertices each. Let $C_i$ be the subgraph of $C$ obtained by deleting the vertex sets $A_i$ and $B_i$ and removing all coloured edges.

Each vertex in $G$ is incident with fewer than
\[ 3n^{5/6} + (\ceiling{n^{5/6}} + r) < 5n^{5/6} \]
edges of $C$ that are coloured, since fewer than $3n^{5/6}$ were coloured in Step~2 and at most $\ceiling{n^{5/6}} + r$ have been coloured already in Step~3. And each vertex has fewer than $6rn^{5/6}$ edges that join it with a vertex in $A_i$ or $B_i$. So the minimum degree of $C_i$ is more than
\[ rn/2 + 14rn^{5/6} - 6rn^{5/6} - 5n^{5/6} > rn/2, \]
and so $C_i$ has a 1-factor $F$ by Lemma \ref{hallcm}. If we colour the edges of $F$ with the colour $c_i$, then every vertex in $G$ is incident with an edge of colour $c_i$, and so the colour class is now a 1-factor of $G$.

We repeat this procedure for each of the colours $c_{k+1}, \dots, c_{k+j}$. After this has been done, each of these $j$ colour classes is a 1-factor of $G$. So at the conclusion of Step~3, all of the edges in $G_A$ and $G_B$ are coloured, some of the edges of $C$ are coloured, and each of the $k+j$ colour classes is a 1-factor of $G$.

\begin{center}Step~4.\end{center}

Let $R$ be the subgraph of $G$ consisting of the remaining uncoloured edges. These edges all belong to $C$, so $R$ is a subgraph of $C$ and hence is bipartite. As each of the $k + j$ colour classes is a 1-factor of $G$, $R$ is regular of degree $d(R) = d(G) - k - j$. Note that since
\[ k < d(G) - (rn/2 + 14rn^{5/6}) + r, \]
and $j < 2n^{5/6}$, $d(R) > rn/2$. By Theorem \ref{konig} (K\"onig's Theorem) we can colour the edges of $R$ with $d(R)$ colours $c_{k + j + 1}, \dots, c_{d(G)}$. Clearly each of these colour classes is a 1-factor of $G$.

This completes our edge-colouring of $G$ with $d(G)$ colours. Each of the colour classes is a 1-factor, so $G$ is 1-factorizable. \end{proof}

\section*{Acknowledgements}

The author would like to thank Richard Mycroft and Mark Walters for some helpful discussions.

\end{document}